\documentclass[12pt]{article}
\usepackage{mathrsfs}
\usepackage{tipa}
\usepackage{amssymb}
\usepackage{amsfonts}
\usepackage{cite}
\usepackage{graphicx}
\usepackage{latexsym,amsmath,amssymb,amsfonts,amsthm}

\newtheorem{theorem}{Theorem}[section]
\newtheorem{lemma}[theorem]{Lemma}

\newtheorem{proposition}[theorem]{Proposition}

\setlength{\textwidth}{155mm} \setlength{\textheight}{230mm}
\setlength{\oddsidemargin}{0mm} \setlength{\topmargin}{-.3in}
\pagestyle{myheadings}

\begin{document}
\setcounter{page}{1}
\title{The dual foliation of polar actions on nonnegatively curved manifolds}
\author{Yi Shi}
\date{}
\protect
\maketitle ~~~\\[-5mm]

\protect\footnotetext{\!\!\!\!\!\!\!\!\!\!\!\!\! {\bf 2020 Mathematics Subject Classification:}
53C12, 53C20.
\\
{\bf ~~Key Words:} polar actions, nonnegatively curved manifolds, dual foliations.}
\maketitle ~~~\\[-5mm]
{\footnotesize School of Mathematics and Statistics, Guizhou University of Finance and Economics, Guiyang 550025, China, e-mail: shiyi\underline{\hbox to 0.2cm{}}math@163.com}\\[1mm]

{\bf Abstract:} \noindent We prove that any dual leaf $L^{\#}$ of a simply connected, complete nonnegatively curved polar manifold $M$ is totally geodesic and closed in $M$, and $L^{\#}$ is itself a complete nonnegatively curved polar manifold. Furthermore, the dual foliation on $M$ induces a Riemannian submersion with totally geodesic fibers from $M$ to a homogeneous space.
\markright{\sl\hfill \hfill}\\

\section{Introduction}
\renewcommand{\thesection}{\arabic{section}}
\renewcommand{\theequation}{\thesection.\arabic{equation}}
\setcounter{equation}{0}

A proper and isometric action of a Lie group $G$ on a complete Riemannian manifold $M$ is called \emph{polar} if there exists a connected complete isometrically immersed submanifold, called a \emph{section}, meeting all orbits and always orthogonally. We call $M$ a \emph{polar manifold} or \emph{polar $G$-manifold}. Polar actions have nice properties and have been studied by many people, see for instance
\cite{Be,BDT,Bi,CO,Da,DDK,EH,FGT,Go04,Go22,GK,Goz,GZ,HPTT,Ko02,Ko09,Ko17,KL,Me,Mu,PaT,PoT}. For further details we refer the readers to the survey article by Gorodski \cite{Go22}.

In this paper, our interest is to study the dual foliation of polar actions on complete Riemannian manifolds. The concept of dual foliation and its foundations were pioneered by Wilking in \cite{wi}. Recall that a \emph{singular Riemannian foliation} $\mathcal{F}$ on a Riemannian manifold $M$ is a decomposition of $M$ into smooth injectively immersed submanfolds $L(x)$, called \emph{leaves}, such that it is a singular foliation and any geodesic starting orthogonally to a leaf remains orthogonal to all leaves it intersects. Such a geodesic is called a \emph{horizontal geodesic}. A piecewise smooth curve is called \emph{horizontal} if it meets the leaves of $\mathcal {F}$ perpendicularly. Suppose a Lie group $G$ acts isometrically on manifold $M$, then the orbit decomposition of the $G$-action gives a singular Riemannian foliation $\mathcal{F}$ on $M$. We call a curve horizontal or $G$-horizontal if it meets the $G$-orbits perpendicularly. We say that $M$ is $G$-\emph{horizontal connected} if any two points in $M$ can be connected by a $G$-horizontal curve. For further details of singular Riemannian foliations, we refer the readers to \cite{ABT,GR,GW,KR,LMR,Ly,LiR,LyR,MR,mol,Ra,wi}.

In \cite{wi}, Wilking associates to a given singular Riemannian foliation $\mathcal{F}$ the so-called \emph{dual foliation} $\mathcal{F}^{\#}$. The \emph{dual leaf} $L^{\#}_{ p}$ through a point $p \in M$ is defined as all points $q \in M$ such that there is a piecewise smooth, horizontal curve from $p$ to $q$. Wilking proved several fundamental properties for the dual foliation $\mathcal{F}^{\#}$ of a singular Riemannian foliation $\mathcal{F}$ on complete nonnegatively curved manifold $M$. Here we just cite some results in \cite{wi} that related to this paper: (1) When $M$ has positive curvature, the horizontal connectivity holds on $M$, i.e., $\mathcal{F}^{\#}$ has only one leaf; (2) If the dual leaves are complete, then $\mathcal{F}^{\#}$ is a singular Riemannian foliation as well; (3) If $\mathcal{F}$ is given by the orbit decomposition of an isometric group action, then the dual leaves are complete. Dual foliation has been used in different situations in literature, see for instance \cite{AGW,GuW,LW,sy1,SS,Sp,wi}.

There are many applications of dual foliation in the study of nonnegatively curved manifolds. Wilking \cite{wi} used it to show that the Sharafutdinov projection is smooth. Very recently, Lytchak and Wilking \cite{LW} used dual foliation to prove that a Riemannian submersion between smooth, compact, nonnegatively curved Riemannian manifolds has to be smooth, resolving a conjecture by Berestovskii-Guijarro \cite{BG}.

The theory of dual foliation also plays an important role in the study of nonnegatively curved manifolds with polar actions. Fang, Grove and Thorbergsson \cite{FGT} proved that a polar action on a simply connected, compact, positively curved manifold of cohomogeneity at least 2 is equivariantly diffeomorphic to a polar action on a compact rank one symmetric space. Lytchak \cite{Ly} proved that a polar foliation of codimension at least three in an irreducible compact symmetric space is hyperpolar, unless the symmetric space has rank one. Both of the two articles used the the theory of dual foliation and Tits building theory (\cite {Ro,Ti}). As pointed out in \cite{FGT} , the classification of polar actions on nonnegatively curved manifolds is significantly more involved. One reason is that the horizontal connectivity holds on positively curved polar manifolds, but not holds on nonnegatively curved polar manifolds in general. In this paper, we overcome this difficulty by proving the following reduction theorem:

\begin{theorem}\label{dl} Suppose a compact Lie group $G$ acts polarly on a simply connected, complete nonnegatively curved manifold $M$.
For any $p\in M$, denote by $H=\{g\in G\ |\ gL_{p}^{\#}=L_{p}^{\#}\}$ the stabilizer group of $L_{p}^{\#}$. Then:

\noindent(1) $L_{p}^{\#}$ is totally geodesic and closed in $M$.

\noindent(2) $H$ is a compact Lie group acting polarly on $L_{p}^{\#}$, and $L_{p}^{\#}$ is itself a $H$-horizontal connected, complete nonnegatively curved polar $H$-manifold.

\noindent(3) The orbit spaces of $(M, G)$ and $(L_{p}^{\#}, H)$ are isometric, i.e., $M/G=L_{p}^{\#}/H$.

\end{theorem}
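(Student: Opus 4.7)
The key step is to identify $L_p^{\#}$ concretely as $H\cdot\Sigma$, where $\Sigma$ is a section of the $G$-action through $p$. Since $\Sigma$ meets every $G$-orbit orthogonally, every geodesic of $\Sigma$ is $G$-horizontal in $M$, so $\Sigma\subset L_p^{\#}$; and since by definition $H$ preserves $L_p^{\#}$, we get $H\cdot\Sigma\subset L_p^{\#}$. Conversely, given $q\in L_p^{\#}$, polarity produces $g\in G$ with $g\cdot q\in \Sigma$; since $g$ preserves the dual foliation, $gL_p^{\#}=L_{g\cdot p}^{\#}$, and this leaf shares the point $g\cdot q\in\Sigma\subset L_p^{\#}$ with $L_p^{\#}$, forcing $gL_p^{\#}=L_p^{\#}$ and hence $g\in H$. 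Thus $L_p^{\#}=H\cdot\Sigma$.

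Closedness in $M$ is then immediate: $H$ is closed in the compact $G$, hence compact, and sections of polar actions on a simply connected manifold are closed totally geodesic submanifolds, so a standard sequence argument (take $q_n=h_ns_n\to q$, pass to a subsequence with $h_n\to h$, observe $s_n\to h^{-1}q\in\Sigma$) makes $H\cdot\Sigma$ closed.

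The main obstacle is totally geodesicity of $L_p^{\#}$. At a principal point $q\in L_p^{\#}$, the tangent space splits as $\mathcal{H}_q\oplus T_q(H\cdot q)$. Horizontal geodesics starting at $q$ stay in $L_p^{\#}$ by definition, and the $H$-orbit through $q$ is contained in $L_p^{\#}$, which controls the ``pure'' blocks of the second fundamental form; the delicate point is the mixed horizontal-vertical block. The plan is to invoke Wilking's theorem (applicable because $M$ is nonnegatively curved and the dual leaves are complete in the group-action case) to conclude that $\mathcal{F}^{\#}$ is a singular Riemannian foliation, then combine the resulting O'Neill-type identity with the integrability of the horizontal distribution on the principal stratum of the polar action, together with the fact that every section is totally geodesic, to force the second fundamental form of $L_p^{\#}$ to vanish on all of $T_qL_p^{\#}$; a smoothness/continuity argument then extends this to singular points.

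With (1) in hand, parts (2) and (3) are largely formal. The $H$-action on $L_p^{\#}$ is isometric, and $\Sigma\subset L_p^{\#}$ is totally geodesic and meets each $H$-orbit in $L_p^{\#}$ orthogonally, since orthogonality to the larger $G$-orbits implies orthogonality to the smaller $H$-orbits contained in them; so $\Sigma$ is a section for the $H$-action on $L_p^{\#}$, which is therefore polar. Horizontal connectivity is immediate: any two points of $L_p^{\#}$ are joined by a $G$-horizontal curve by definition of the dual leaf, and such a curve is automatically $H$-horizontal. Nonnegative curvature descends via the Gauss equation. For (3), one checks $N_G(\Sigma)\subset H$ by the same dual-leaf preservation argument used in Step 1, so $(N_G(\Sigma),Z_G(\Sigma))=(N_H(\Sigma),Z_H(\Sigma))$, the Weyl groups agree, and the common Weyl chamber gives $M/G=\Sigma/W=L_p^{\#}/H$.
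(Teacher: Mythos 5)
Your identification $L_p^{\#}=H\cdot\Sigma$ is correct and matches the content of the paper's Theorem 3.2 (every $H$-orbit in $L_p^{\#}$ meets $\Sigma$), and your treatment of parts (2) and (3) is essentially the paper's argument. But the two hard points of the theorem are exactly where the proposal is not a proof. First, compactness of $H$: you assert that $H$ is closed in the compact group $G$ with no argument, yet there is no a priori reason the stabilizer of a (not yet known to be closed) dual leaf is closed in $G$; the paper earns this via a gallery/chamber argument showing that $H^{o}$ is generated by the finitely many identity components of the face isotropy groups of a chamber (hence closed), with $H/H^{o}$ finite. Your closedness argument for $H\cdot\Sigma$ is then circular --- you need $H$ compact to run the convergence argument, and the natural route to ``$H$ closed'' passes through ``$L_p^{\#}$ closed'' --- and it additionally assumes $\Sigma$ is closed in $M$, which is not part of the definition of a section (only a complete immersed submanifold) and is not established. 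The paper avoids $\Sigma$ here entirely: it projects a convergent sequence onto a single orbit $G(x)$, lands in $L_p^{\#}\cap G(x)=H^{o}(\bar x_1)$, and uses compactness of $H^{o}$.

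Second, and more seriously, total geodesicity. You claim the ``pure'' blocks of the second fundamental form are controlled; this is true for the horizontal block (horizontal geodesics lie in $L_p^{\#}$), but false as stated for the vertical one: $H(q)\subset L_p^{\#}$ only gives $T_qH(q)\subset T_qL_p^{\#}$, and the component of the second fundamental form of the orbit $H(q)$ normal to $L_p^{\#}$ is precisely the block you need to kill, so nothing is gained. For the mixed block you acknowledge the difficulty but offer only ``an O'Neill-type identity plus integrability of the horizontal distribution,'' which is not an argument and does not locate where nonnegative curvature is actually used. The paper's mechanism is different and is the crux of the whole theorem: for regular $p$ set $\mathfrak{m}=\{X\in\mathfrak{g}\ |\ X(p)\perp T_pL_p^{\#}\}$; by Corollary 18.7 of Lytchak--Wilking (this is where nonnegative curvature enters) such Killing fields are parallel along horizontal geodesics and stay normal to the dual leaves, and $\mathrm{Ad}_{H^{o}}$-invariance of $\mathfrak{m}$ (proved using the generation of $H^{o}$ by face isotropy groups) propagates the equality $\mathfrak{m}(x)=T^{\perp}_{x}L_p^{\#}$ to all of $L_p^{\#}$. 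Then every normal vector $u$ at $x$ extends to a Killing field $X\in\mathfrak{m}$ normal to $L_p^{\#}$ along all of $L_p^{\#}$, so $\langle A_{u}v,v\rangle=-\langle\nabla_{v}X,v\rangle=0$ and the Weingarten operator vanishes identically. Without this input, or some genuine substitute for it, your proof of part (1) does not go through.
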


Part of Theorem \ref{dl} holds without assumption about curvature, see Theorem \ref{generate}.
Combine Theorem \ref{dl} with Proposition 9.1 in \cite{wi} we get the following:
\begin{theorem}\label{df}
Suppose a compact Lie group $G$ acts polarly on a simply connected, complete nonnegatively curved manifold $M$. Let $\mathcal{F}$ denote the singular Riemannian foliation induced by the orbit decomposition of $G$. Then one of the following occurs:

\noindent(1) The dual foliation $\mathcal{F}^{\#}$ has only one leaf, i.e., $M$ is $G$-horizontal connected.

\noindent(2) There is a closed subgroup $H\subsetneq G$, an invariant metric on $G/H$, and a $G$-equivariant Riemannian submersion $\sigma: M\rightarrow G/H$ with totally geodesic fibers. Furthermore, any fiber $B$ of $\sigma$ is a leaf of $\mathcal{F}^{\#}$, and $B$ is isometric to a $H$-horizontal connected, complete nonnegatively curved polar $H$-manifold.
\end{theorem}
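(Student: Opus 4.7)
The plan is to deduce Theorem \ref{df} by combining the structural information of Theorem \ref{dl} with Proposition 9.1 of \cite{wi}, which produces a Riemannian submersion from a singular Riemannian foliation whose leaves are closed, totally geodesic, and pairwise isometric. First I would handle the dichotomy: if $L_p^{\#}=M$ for one (and hence every) $p\in M$, then $\mathcal{F}^{\#}$ has a single leaf and conclusion (1) holds, so assume from now on that $L_p^{\#}\subsetneq M$. The strategy in this nontrivial case is to show that the collection of dual leaves is the fiber decomposition of a $G$-equivariant Riemannian submersion onto $G/H$.

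Next I would identify the leaf space of $\mathcal{F}^{\#}$ with a homogeneous space $G/H$. Because $G$ acts by isometries, it sends horizontal curves to horizontal curves, so $g\cdot L_q^{\#}=L_{gq}^{\#}$ for all $g\in G$ and $q\in M$, and the stabilizer of $L_p^{\#}$ under this action on the set of dual leaves is precisely the subgroup $H$ from Theorem \ref{dl}. The key claim is that this action is transitive: by part (3) of Theorem \ref{dl}, $M/G=L_p^{\#}/H$, so every $G$-orbit meets $L_p^{\#}$; given $q\in M$ one picks $g\in G$ with $gq\in L_p^{\#}$ and concludes $L_q^{\#}=g^{-1}L_p^{\#}$. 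Thus the set of dual leaves is identified with $G/H$, and since $L_p^{\#}\neq M$ we have $H\subsetneq G$.

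Then I would invoke Proposition 9.1 of \cite{wi}. By part (1) of Theorem \ref{dl} the dual leaves are closed and totally geodesic in $M$, and the transitive $G$-action from the previous step makes them pairwise isometric. These are exactly the hypotheses under which Proposition 9.1 upgrades the leaf decomposition to a Riemannian submersion $\sigma\colon M\to G/H$ with totally geodesic fibers equal to the dual leaves, where $G/H$ is endowed with the natural quotient metric. The map $\sigma$ is $G$-equivariant by construction, sending $q$ to $L_q^{\#}$ and intertwining the $G$-action on $M$ with the standard left action on $G/H$; this also forces the quotient metric on $G/H$ to be $G$-invariant. Finally, part (2) of Theorem \ref{dl} identifies each fiber $B=L_q^{\#}$ of $\sigma$ as an $H$-horizontal connected, complete, nonnegatively curved polar $H$-manifold (up to conjugation of $H$ by the element carrying $L_p^{\#}$ to $L_q^{\#}$).

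The main technical point I anticipate is the second step: without the orbit equality $M/G=L_p^{\#}/H$ provided by Theorem \ref{dl}(3) there is no a priori reason for $G$ to act transitively on the set of dual leaves, and this transitivity is what turns the leaf space into a smooth homogeneous space on which Wilking's Proposition 9.1 can be applied directly. Once transitivity is in hand, the remainder amounts to bookkeeping — the totally geodesic and Riemannian submersion properties are built into the cited proposition, and equivariance is automatic from the $G$-action on the leaf set.
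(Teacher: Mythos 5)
Your proposal is correct and follows essentially the same route as the paper: both use Theorem \ref{dl} (closedness and total geodesy of the dual leaves together with $M/G=L_{p}^{\#}/H$) to show that $G$ acts transitively on the set of dual leaves with stabilizer $H$, identify the leaf space with $G/H$, and then invoke Proposition 9.1 of \cite{wi} to obtain the $G$-equivariant Riemannian submersion with totally geodesic fibers. The only cosmetic difference is that the paper first passes through Theorems 2 and 3 of \cite{wi} to realize $\mathcal{F}^{\#}$ as a singular Riemannian foliation whose leaves are the fibers of a submetry onto the leaf space before upgrading that submetry to a Riemannian submersion.
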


Theorem \ref{dl} and \ref{df} show that we can always reduce a simply connected, complete nonnegatively curved polar $G$-manifold $M$ to a $H$-horizontal connected, complete nonnegatively curved polar $H$-manifold $L_{p}^{\#}$.

\noindent$\mathbf{Funding.}$ The author is supported by National Natural Science Foundation of China (No.12261013) and innovative exploration and academic seedling project of Guizhou University of Finance and Economics (No.2024XSXMA09).

\noindent$\mathbf{Acknowledgement.}$ I would like to thank the referee for the careful review and critical comments that improved the exposition.

\section{Preliminaries}
\renewcommand{\thesection}{\arabic{section}}
\renewcommand{\theequation}{\thesection.\arabic{equation}}
\setcounter{equation}{0}
\setcounter{theorem}{0}

In this section we recall known properties of polar actions. Suppose a Lie group $G$ acts polarly on a complete Riemannian manifold $M$. Let $M_{o}$ denote the union of \emph{regular points} in $M$, i.e., $M_{o}$ is the union of all principal orbits in $M$. We denote by $G^{o}$ the identity component of $G$. Fix a section $\Sigma$ of the polar action $(M, G)$, a point $p\in \Sigma\cap M_{o}$. Set $N_{G}(\Sigma):=\{g\in G|\ g\Sigma=\Sigma\}$ and $Z_{G}(\Sigma):=\{g\in G|\ gx=x, \forall x\in\Sigma\}$. We call $W_{G}(\Sigma):= N_{G}(\Sigma)/Z_{G}(\Sigma)$ the \emph{polar group} of $(M, G)$ associated to $\Sigma$.

Most of results in the following proposition are proved in \cite{PaT}, see also for \cite{Go22,GZ}:

\begin{proposition}\label{p1} Let $(M, G)$ be a polar action with a section $\Sigma$, and $W_{G}(\Sigma)$ the polar group of $(M, G)$ associated to $\Sigma$. Then:

\noindent(1) Any section of $(M, G)$ can be written as $g\Sigma$ for some $g\in G$, and any section is totally geodesic in $M$.

\noindent(2) There exists a unique section through a regular point $p\in M_{o}$, and it is given by $\exp_{p}(T^{\perp}_{p}(G(p)))$.

\noindent(3) The polar group $W_{G}(\Sigma)$ is a discrete subgroup of $N(G_{p})/G_{p}$ and acts isometrically and properly discontinuously on $\Sigma$, where $p\in \Sigma\cap M_{o}$ and $N(G_{p})$ is the normalizer of $G_{p}$ in $G$. Furthermore, the orbit spaces $\Sigma/W_{G}(\Sigma)$ and $M/G$ are isometric and thus $M/G$ is an orbifold.

\noindent(4) For any $p\in \Sigma$, the slice representation $(T^{\perp}_{p}(G(p)), G_{p})$ at $p$ is also polar with a section $T_{p}\Sigma$. Also, the isotropy subgroup $G_{p}$ acts transitively on the set of sections of $(M, G)$ through $p$.
\end{proposition}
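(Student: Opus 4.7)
The plan is to establish the four parts in sequence, using the defining orthogonality of sections and the slice theorem, broadly following the scheme of Palais--Terng \cite{PaT}. For part (1), I would first show that sections are totally geodesic. Let $\Sigma$ be a section and $p\in\Sigma\cap M_{o}$ a regular point. Orthogonality forces $T_{p}\Sigma\subseteq T_{p}^{\perp}(G(p))$, and a dimension count (both sides equal the cohomogeneity) yields $T_{p}\Sigma=T_{p}^{\perp}(G(p))$. For $v\in T_{p}\Sigma$, the geodesic $\gamma_{v}$ starts perpendicular to $G(p)$; since $\Sigma$ meets every nearby orbit orthogonally with $T_{q}\Sigma=T_{q}^{\perp}(G(q))$ at every regular $q\in\Sigma$, the local picture forces $\gamma_{v}$ to stay in $\Sigma$ on the regular stratum, and a limiting argument using completeness of $\Sigma$ extends this across singular crossings. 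For the orbit-of-sections statement, given another section $\Sigma'$ and a regular $q\in\Sigma'$, there exist $p\in\Sigma$ and $g\in G$ with $gp=q$; then $g\Sigma$ is a section through the regular point $q$, so uniqueness from part (2) gives $g\Sigma=\Sigma'$.

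Part (2) is then immediate: any section through a regular $p$ is totally geodesic with tangent space $T_{p}^{\perp}(G(p))$, hence locally coincides with $\exp_{p}(T_{p}^{\perp}(G(p)))$, and completeness plus connectedness upgrade this to global equality. For part (3), I would first embed $W_{G}(\Sigma)\hookrightarrow N(G_{p})/G_{p}$: conjugation by $g\in N_{G}(\Sigma)$ preserves the isotropy along the connected set $\Sigma\cap M_{o}$ (all equal to $G_{p}$), so $gG_{p}g^{-1}=G_{p}$, and the kernel of $N_{G}(\Sigma)\to N(G_{p})/G_{p}$ is exactly $Z_{G}(\Sigma)$. Proper discontinuity follows from properness of the $G$-action on $M$; discreteness follows because the $W_{G}(\Sigma)$-orbit of a regular $p$ equals $G(p)\cap\Sigma$, which is discrete in $\Sigma$ (two such nearby points would generate coincident sections by part (2)). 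The isometry $\Sigma/W_{G}(\Sigma)\cong M/G$ is then surjective because $\Sigma$ meets every orbit, and injective because $p,q\in\Sigma$ with $q=gp$ forces $g\Sigma$ to be a section through $q\in\Sigma$, hence $g\Sigma=\Sigma$ by part (2).

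For part (4), the slice theorem gives a $G$-equivariant identification of a tubular neighborhood of $p$ with $G\times_{G_{p}}T_{p}^{\perp}(G(p))$, reducing the local polar structure at $p$ to the slice representation $(T_{p}^{\perp}(G(p)),G_{p})$. The subspace $T_{p}\Sigma$ is a linear section: $\exp_{p}$ restricts to a diffeomorphism of a neighborhood of $0\in T_{p}\Sigma$ onto a neighborhood of $p\in\Sigma$ and meets orbits orthogonally, so linearly $T_{p}\Sigma$ meets the $G_{p}$-orbits in $T_{p}^{\perp}(G(p))$ orthogonally. Transitivity of $G_{p}$ on sections through $p$ is immediate from (1): any section through $p$ has the form $g\Sigma$ with $gp=p$, i.e., $g\in G_{p}$. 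The step I expect to be the main obstacle is the global totally-geodesic claim in (1): the simple orthogonality argument only controls $\gamma_{v}$ on the regular stratum, and one must carefully use completeness of $\Sigma$ together with continuity of the isotropy structure to guarantee that $\gamma_{v}$ does not escape $\Sigma$ when it crosses singular orbits.
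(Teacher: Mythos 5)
The paper does not prove this proposition at all: it is quoted from Palais--Terng \cite{PaT} (see also \cite{Go22,GZ}), so there is no in-paper argument to compare against. Judged on its own, your outline follows the standard Palais--Terng scheme, but it has a genuine gap at exactly the point you flag. The claim that ``the local picture forces $\gamma_{v}$ to stay in $\Sigma$ on the regular stratum'' is not an argument -- it is the totally geodesic property restated. Knowing that $T_{q}\Sigma=T_{q}^{\perp}(G(q))$ at every regular $q\in\Sigma$ and that $\gamma_{v}$ remains horizontal does not confine $\gamma_{v}$ to $\Sigma$: horizontality is a condition relative to the orbits, and a horizontal geodesic can perfectly well pass through points off $\Sigma$. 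The standard proof instead computes the second fundamental form against the action's Killing fields: for $U,V$ tangent to $\Sigma$ and $X$ an action field, $\langle V,X\rangle\equiv 0$ along $\Sigma$ gives $\langle\nabla_{U}V,X\rangle=-\langle V,\nabla_{U}X\rangle$, which is antisymmetric in $U,V$ by the Killing equation while $\langle\nabla_{U}V,X\rangle-\langle\nabla_{V}U,X\rangle=\langle[U,V],X\rangle=0$ makes it symmetric; hence it vanishes. At regular points the action fields span $(T_{q}\Sigma)^{\perp}$, so the second fundamental form vanishes on the dense regular part and, by continuity, everywhere. This is the missing idea; no ``limiting argument across singular crossings'' is needed once you have it.

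Two smaller but real defects. In part (3), your injectivity argument for $\Sigma/W_{G}(\Sigma)\to M/G$ invokes the uniqueness of part (2), which only applies when $q=gp$ is regular; for singular $q$ you must instead use the transitivity of $G_{q}$ on sections through $q$ from part (4) to replace $g$ by $kg\in N_{G}(\Sigma)$ with $kgp=q$. Symmetrically, in part (4) the step ``any section through $p$ has the form $g\Sigma$ with $gp=p$, i.e.\ $g\in G_{p}$'' is a non sequitur: $p\in g\Sigma$ only gives $g^{-1}p\in\Sigma\cap G(p)$, and you must first appeal to $\Sigma\cap G(p)=N_{G}(\Sigma)(p)$ to correct $g$ by an element of $N_{G}(\Sigma)$ before landing in $G_{p}$. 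As written, (3) and (4) lean on each other; the dependencies need to be untangled (prove $\Sigma\cap G(q)=N_{G}(\Sigma)(q)$ via the slice representation first). You also assert that $\Sigma/W_{G}(\Sigma)\cong M/G$ is an isometry of metric spaces without addressing why the quotient distances agree, which again uses horizontal geodesics.
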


The following Proposition will be used in our proof:

\begin{proposition}\label{p2}(Proposition 2.4 in \cite{HPTT}) Let $M$ be a Riemannian $G$-manifold. A submanifold $\Sigma$ of $M$ is a section for the action of $G$ if and only if it is a section for the action of $G_{o}$. In particular, the action of $G$ on $M$ is polar if and only if the action of $G_{o}$ on $M$ is polar.
\end{proposition}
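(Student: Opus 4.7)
The plan is to treat the two implications separately. The direction ``$\Sigma$ is a section for $G_o \Rightarrow \Sigma$ is a section for $G$'' is straightforward: since $G_o$ is open in $G$ one has $T_p(G\cdot p)=T_p(G_o\cdot p)$, so the two orthogonality conditions agree, and every $G$-orbit is a union of $G_o$-orbits, so meeting all $G_o$-orbits forces $\Sigma$ to meet all $G$-orbits. For the converse, orthogonality to $G_o$-orbits is again automatic and the substantive content is the identity $G_o\cdot\Sigma = M$.

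To prove $G_o\cdot\Sigma = M$ I would argue by connectedness of $M$, showing that $G_o\cdot\Sigma$ is both open and closed. For openness at a regular $p\in\Sigma\cap M_o$, the direct sum $T_pM = T_p\Sigma \oplus T_p(G_o\cdot p)$ makes the smooth map $(g,s)\mapsto gs\colon G_o\times\Sigma\to M$ a submersion at $(e,p)$, so its image covers a neighborhood of $p$. At a singular $p\in\Sigma$, I would combine the slice theorem for the proper $G_o$-action (a tubular neighborhood has the form $G_o\times_{G_{o,p}}\exp_p(B)$ with $B\subset T_p^\perp(G_o\cdot p)=T_p^\perp(G\cdot p)$) with Proposition~\ref{p1}(4): the slice representation $(T_p^\perp(G\cdot p),G_p)$ is polar with section $T_p\Sigma$, so each $v\in B$ is $G_p$-related to some $w\in T_p\Sigma$, and since $\Sigma$ is totally geodesic by Proposition~\ref{p1}(1), exponentiating places the slice inside $G_p\cdot\Sigma$. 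An induction on $\dim M$ applied to the slice representation then upgrades this to containment in $G_o\cdot\Sigma$.

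For closedness, properness of the $G_o$-action (inherited from $G$) implies that if $g_n s_n\to x$ in $M$ with $g_n\in G_o$ and $s_n\in\Sigma$, then after passing to a subsequence $g_n\to g\in G_o$, so $s_n\to g^{-1}x$ in the complete totally geodesic $\Sigma$, giving $x\in G_o\cdot\Sigma$. The main obstacle is the openness step at singular points of $\Sigma$: the induction on $\dim M$ must terminate with a direct argument when $p$ is fixed by $G_o$ and the slice representation has full dimension, where one falls back on the standard fact from the theory of polar representations that any two sections are conjugate by an element of the identity component of the acting group.
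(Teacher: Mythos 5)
First, note that the paper itself gives no proof of this statement: it is quoted verbatim as Proposition~2.4 of \cite{HPTT}, so there is no internal argument to compare yours against; I can only measure your proposal against the standard proof of that result. Your first direction and your reduction of the converse to the identity $G_o\cdot\Sigma=M$ are both correct (the equality $T_p(G\cdot p)=T_p(G_o\cdot p)$ and the fact that $G$-orbits are unions of $G_o$-orbits are exactly the right observations). The open--closed strategy for $G_o\cdot\Sigma=M$, however, has two genuine gaps. The first is the closedness step: properness of the action only yields a convergent subsequence of $(g_n)$ when the points $s_n\in\Sigma$ stay in a compact set, which you have no way to guarantee; and even if $g_n\to g$, you only get $s_n\to g^{-1}x$ in $M$, which places $g^{-1}x$ in the \emph{closure} of $\Sigma$ --- since $\Sigma$ is merely a complete isometrically immersed submanifold, its image need not be closed, so you cannot conclude $x\in G_o\cdot\Sigma$. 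The second gap is the terminal case of your induction: the ``standard fact'' that any two sections of a polar representation are conjugate by an element of the identity component is precisely the linear instance of the statement you are proving (every $(G_p)_o$-orbit in the slice must meet $T_p\Sigma$), so invoking it without proof or reference makes the argument circular at exactly the point where the content lies.

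Both difficulties disappear if you replace the open--closed scheme by the standard minimal-geodesic argument, which is essentially the proof in \cite{HPTT}. Fix a regular point $p\in\Sigma\cap M_o$, where $T_p\Sigma=T_p^{\perp}(G\cdot p)=T_p^{\perp}(G_o\cdot p)$ by the dimension count you already used. For any $x\in M$ the orbit $G_o\cdot x$ is closed by properness, so completeness of $M$ gives a geodesic $\gamma$ from $p$ realizing $d(p,G_o\cdot x)$. The distance function to $G_o\cdot x$ is $G_o$-invariant, hence constant on $G_o\cdot p$, and the first variation formula then forces $\gamma'(0)\perp T_p(G_o\cdot p)$, i.e.\ $\gamma'(0)\in T_p\Sigma$. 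Since $\Sigma$ is totally geodesic (Proposition~\ref{p1}(1)) and complete, $\gamma$ stays in $\Sigma$, so its endpoint lies in $\Sigma\cap G_o\cdot x$. This handles all orbits at once, with no case distinction between regular and singular points, no induction on dimension, and no appeal to linear polar representation theory.
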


For a section $\Sigma$ of $(M, G)$ with polar group $W_{G}(\Sigma)$, there is a non-trivial normal subgroup $W\subset W_{G}(\Sigma)$ generated
by \emph{reflections}. We refer to any group generated by reflections as a \emph{reflection group}. An isometric action $r: \Sigma\rightarrow \Sigma$ is called a reflection if $r$ has order 2, and at least one component of the fixed-point set has codimension 1. The codimension-1 components of the fixed-point set are referred to as the \emph{mirrors} of $r$. A connected component $c$ of the complement of all mirrors is called an \emph{open chamber} of $\Sigma$. We denote the closure of an open chamber by $C =\bar{c}$ and refer to it simply as a \emph{chamber}. The boundary $\partial C=C\backslash c$ of a chamber $C$ is the union of its \emph{chamber faces}, where a chamber face is an intersection $C\cap \Lambda$ with a mirror $\Lambda$ such that $C\cap \Lambda$ is the closure of a non-empty open subset of $\Lambda$. We will frequently simply write ``faces" when we mean ``chamber faces".

The following result is proved more generally for singular polar foliations in [\cite{AT}, Theorem 1.5] and [\cite{Al}, Theorem 1.1]:

\begin{proposition}\label{AT}(\cite{Al, AT}) Any non-trivial polar action on a simply connected manifold has no exceptional orbits, and its reflection group $W$ is the whole polar group $W_{G}(\Sigma)$.
\end{proposition}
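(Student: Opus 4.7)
The plan is to prove both conclusions simultaneously, by first establishing their equivalence and then using simple connectedness of $M$ to rule out the ``bad'' case. Throughout, I work with a section $\Sigma$ with polar group $W_{G}(\Sigma)$ and its reflection subgroup $W$, and use Proposition \ref{p1} extensively.

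First I would verify the equivalence: the polar action has exceptional orbits if and only if $W\subsetneq W_{G}(\Sigma)$. By Proposition \ref{p1}(3), $W_{G}(\Sigma)$ acts properly discontinuously on $\Sigma$ and the orbit spaces $\Sigma/W_{G}(\Sigma)$ and $M/G$ agree. The mirrors of $W$ correspond exactly to the faces of codimension-one singular isotropy (this is the content of ``reflection orbits''), so $W$ already accounts for the entire codimension-one stratum of singular points in $\Sigma$. Consequently, if $W=W_{G}(\Sigma)$ then every open chamber $c$ is an open fundamental domain for $W_{G}(\Sigma)$ and maps injectively to $M/G$; combined with Proposition \ref{p1}(2) this forces every point in $c$ to be regular, hence no exceptional orbits. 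Conversely, any nontrivial element of $W_{G}(\Sigma)/W$ interchanges two interior points $p,q$ of some chamber $c$; since $p,q$ are both regular in $\Sigma$ but lie in the same $G$-orbit, the isotropy $G_{p}$ must strictly contain the principal isotropy (as it permutes the distinct local sections through $p$ by Proposition \ref{p1}(4)), producing an exceptional orbit.

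Next I would derive a contradiction from $W\subsetneq W_{G}(\Sigma)$ using simple connectedness. Let $N\subset N_{G}(\Sigma)$ be the preimage of $W$ under $N_{G}(\Sigma)\to W_{G}(\Sigma)$, so that $Z_{G}(\Sigma)\subset N\subsetneq N_{G}(\Sigma)$ and $N/Z_{G}(\Sigma)=W$. The strategy is to build a nontrivial connected covering $\widetilde{M}\to M$, which must be absurd since $M$ is simply connected. One natural candidate is
\[
\widetilde{M}\;=\;G\times_{N}\Sigma\Big/\!\sim,
\]
where $\sim$ is the equivalence relation extending the $N$-action by identifications along mirrors of $W$, dictated by the slice theorem. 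Over the regular stratum $M_{o}$ this gives an honest $[W_{G}(\Sigma):W]$-fold covering of $M_{o}$, since $M_{o}\to M_{o}/G=\Sigma_{o}/W_{G}(\Sigma)$ is a principal bundle and we are only dividing by the smaller group $W$. The key point is that because reflections in $W$ produce the entire codimension-one singular stratum of $\Sigma$, the gluing along mirrors already matches what the slice representation does in $M$, and the map $\widetilde{M}\to M$ extends as a covering across the codimension-one singular stratum. Across higher-codimension strata one uses Proposition \ref{p1}(4): the slice representation at any $p\in\Sigma$ is itself polar with the same reflection-versus-polar-group dichotomy, so an inductive argument on the codimension of the stratum lets one extend the covering everywhere.

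The principal obstacle will be Step~2: verifying rigorously that the candidate $\widetilde M$ is a connected manifold and that the projection to $M$ is a genuine covering map along singular strata of codimension $\ge 2$. This is exactly the technical core of \cite{AT,Al}, where the language of singular Riemannian foliations with sections is used to carry out the extension via a slice-by-slice gluing argument; in the group setting the slices are polar $G_{p}$-modules so the induction is well-founded, but one must carefully track how the finite group $W_{G}(\Sigma)/W$ acts on the system of local slices to ensure global consistency. Once the covering is constructed, the simple connectedness of $M$ forces $[W_{G}(\Sigma):W]=1$, giving $W=W_{G}(\Sigma)$, and by Step~1 the action has no exceptional orbits.
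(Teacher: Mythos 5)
This proposition is not proved in the paper at all: it is quoted verbatim from [\cite{AT}, Theorem 1.5] and [\cite{Al}, Theorem 1.1], so there is no in-house argument to compare yours against. Judged on its own merits, your proposal has a genuine gap already in Step 1: the claimed equivalence ``exceptional orbits exist $\iff$ $W\subsetneq W_G(\Sigma)$'' is false in both directions, so the two conclusions of the proposition cannot be traded for one another. For the forward direction, take $SO(2)$ acting on $\mathbb{RP}^2=S^2/\pm$ by rotations: the image of the equator is an exceptional orbit (isotropy $\mathbb{Z}_2$, same dimension as the principal circles), the section is a closed geodesic $\Sigma\cong S^1$, and $W_G(\Sigma)\cong\mathbb{Z}_2$ is generated by an involution of $\Sigma$ whose fixed set is two points, i.e.\ a reflection; hence $W=W_G(\Sigma)$ even though an exceptional orbit is present. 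The problem with your argument is that exceptional orbits can pass through mirror points on the boundary of a chamber, which your ``every point of the open chamber $c$ is regular'' reasoning does not see. For the converse direction, take $\mathbb{Z}_2$ acting freely on the flat torus by translation by $(1/2,0)$: here $\Sigma=M$, $W_G(\Sigma)\cong\mathbb{Z}_2$ acts without fixed points, so $W=\{e\}\subsetneq W_G(\Sigma)$, yet the action is free and has no exceptional orbits. In particular your claim that two regular points of one chamber lying in the same orbit forces the isotropy to exceed the principal isotropy is simply wrong.

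Step 2 is also not a proof: you yourself identify the verification that $\widetilde M\to M$ extends as a covering across the strata of codimension at least two as ``exactly the technical core of \cite{AT,Al}'', i.e.\ you defer the essential difficulty to the references whose theorem you are trying to reprove. Even granting Step 2 in full, it would only yield $W=W_G(\Sigma)$; because the equivalence of Step 1 fails, the absence of exceptional orbits would still be unproven and requires a separate argument (in \cite{AT} it is obtained from a distinct covering/holonomy argument for the regular stratum, not as a corollary of $W=W_G(\Sigma)$). So the proposal does not establish either conclusion of the proposition.
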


By Proposition \ref{AT} above, the following result is a special case of Proposition 2.3 in \cite{GZ}:

\begin{proposition}\label{p3}(Proposition 2.3 in \cite{GZ}) Let $G$ be a polar action on a simply connected manifold $M$ with a section $\Sigma$, and $W_{G}(\Sigma)$ the polar group of $(M, G)$ associated to $\Sigma$. Then $W_{G}(\Sigma)$ acts freely and simply transitively on the set of chambers. The chamber $C$ is a convex set isometric to $\Sigma/W_{G}(\Sigma)= M/G$. Furthermore, the isotropy group along the interior of a chamber face $F_{i}$ is constant equal to the isotropy group of $F_{i}$.
\end{proposition}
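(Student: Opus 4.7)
The plan is to invoke Proposition \ref{AT} to identify $W_{G}(\Sigma)$ with its reflection subgroup $W$, thereby reducing every assertion to properties of a properly discontinuous reflection group acting isometrically on the Riemannian manifold $\Sigma$. All of the geometric data we need is already in place: $\Sigma$ is totally geodesic in $M$ by Proposition \ref{p1}(1); $W_{G}(\Sigma)$ is discrete and acts properly discontinuously on $\Sigma$ with $\Sigma/W_{G}(\Sigma)\cong M/G$ by Proposition \ref{p1}(3); simple connectedness of $M$ is used only through Proposition \ref{AT}.

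First I would show that $W=W_{G}(\Sigma)$ acts freely and simply transitively on the set of chambers. Freeness is immediate since a nontrivial element of $W$ is a nontrivial isometry of $\Sigma$ and so cannot preserve a chamber pointwise; equivalently, its fixed-point set lies in the union of mirrors. For transitivity, given chambers $C$ and $C'$, choose interior points and a piecewise geodesic path between them in $\Sigma$, perturbed to cross mirrors transversally and generically one at a time; each crossing corresponds to a reflection, and the composition produces an element of $W$ sending $C$ to $C'$. Properness of the $W$-action guarantees that the path meets only finitely many mirrors, so the procedure terminates.

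Next, convexity of $C$ follows because each mirror $\Lambda$ is a component of the fixed-point set of an involutive isometry of the totally geodesic submanifold $\Sigma$, hence is itself totally geodesic and of codimension one in $\Sigma$. A minimizing geodesic between two points in the interior of $C$ that crossed a mirror could be reflected to a curve of equal or shorter length lying in the union of two chambers, contradicting the structure of chambers as components of the mirror complement. Simple transitivity then implies that the quotient map $\Sigma\to \Sigma/W$ restricts to an isometric bijection on $C$; combined with Proposition \ref{p1}(3), this gives $C\cong \Sigma/W_{G}(\Sigma)=M/G$.

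For the final statement on isotropy, I would fix $x$ in the interior of a face $F_{i}\subset \Lambda_{i}$ and apply Proposition \ref{p1}(4): the slice representation $(T^{\perp}_{x}(G(x)), G_{x})$ is polar with section $T_{x}\Sigma$, in which $T_{x}\Lambda_{i}$ is a mirror for the induced reflection group. For $y$ in the interior of $F_{i}$ close to $x$, the exponential of the slice identifies neighborhoods $G$-equivariantly, forcing $G_{y}=G_{x}$; connectedness of the interior of $F_{i}$ then propagates this equality. Since the pointwise stabilizer of $F_{i}$ is contained in every $G_{x}$ with $x\in F_{i}$ and contains their common value, the two coincide. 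The main obstacle is the simple-transitivity argument: classical Coxeter theory assumes a Euclidean or constant-curvature setting, and here one must handle a general Riemannian $\Sigma$, relying crucially on properness of the $W$-action to control the combinatorics of mirror crossings and ensure that chambers are well behaved.
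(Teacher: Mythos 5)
The paper offers no proof of Proposition \ref{p3} at all: it is imported verbatim from Proposition 2.3 of \cite{GZ}, with Proposition \ref{AT} (from \cite{Al,AT}) supplying the hypotheses of that result, namely that there are no exceptional orbits and that the reflection group is the full polar group. Measured against the actual content of that cited result, your sketch has two genuine gaps. The first is freeness. Freeness of the $W_{G}(\Sigma)$-action on the set of chambers means that $wC=C$ \emph{as a set} forces $w=1$; your argument only rules out a nontrivial $w$ fixing $C$ pointwise (via the triviality of the stabilizer of a regular point, which is correct but is a different statement). An isometry can permute a chamber nontrivially while preserving it setwise, and excluding this is precisely the hard part of the proposition; it is where the absence of exceptional orbits, equivalently the fact that no two faces of a chamber are $W$-equivalent, must be used. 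Your sketch never invokes this, so as written it would ``prove'' simple transitivity in situations where it fails.

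The second gap is convexity. The reflection-shortening argument is the Euclidean/constant-curvature one: in a general Riemannian section $\Sigma$ a component of the complement of a totally geodesic hypersurface need not be convex, and reflecting the tail of a minimizing geodesic from $x$ to $y$ across the first mirror it meets produces a curve from $x$ to $r(y)$ rather than to $y$, so no contradiction with the minimality of $d(x,y)$ is obtained. The convexity of $C$ in this generality has to come from the polar structure, essentially by identifying $C$ with the orbit space $M/G$ and using that minimizing geodesics between orbits in the complete $G$-manifold $M$ are horizontal, hence lie in a section and stay within a single chamber; it cannot be read off from the mirrors inside $\Sigma$ alone. By contrast, your transitivity argument (generic paths crossing one mirror at a time, with properness giving finiteness) and your slice-representation argument for the constancy of the isotropy group along the interior of a face are sound and consistent with how these points are handled in \cite{GZ} and in Section 2 of this paper.
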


The following properties about chamber system are showed in page 9 in \cite{FGT},  here we just cite these properties.  By Proposition \ref{AT} and Proposition \ref{p3}, we get that $G$ acts transitively on the set of all chambers in all sections of $M$, i.e., $M = \cup_{g\in G}gC$. The chamber faces $F_{i},\ i = 1, \ldots, k$, of $C$ correspond to a set of generators $r_{i}$ for $W=W_{G}(\Sigma)$. This way all faces of chambers $gC, g\in G$ of $M$ get labeled consistently, so that $G$ is label preserving. Now define two chambers $g_{1}C$ and $g_{2}C$ to be $i$-adjacent if they have a common $i$ face $g_{1}F_{i} = g_{2}F_{i}$. This relation among the set of chambers in $M$, respectively all chambers in a fixed section $\Sigma$ make both of these sets into a chamber system $\Phi(M, G)$, respectively $\Phi(\Sigma, W_{G}(\Sigma))$ according to the following definition (see, e.g.,\cite{Ro,Ti}):

An \emph{(abstract) chamber system over} $I = \{1, \ldots, k\}$ is a set $\Phi$ together with a partition of $\Phi$ for every $i\in I$. Elements in the same part of the $i$-partition, are said to be \emph{$i$-adjacent}. The elements of $\Phi$ are called \emph{chambers}. A \emph{gallery} in $\Phi$ is a sequence $\Gamma= (C_{0}, \ldots, C_{m})$ such that $C_{j}$ is $i_{j}$-adjacent to $C_{j+1}$ for every $0 \leq j \leq m-1$.

The gallery is a key tool in the proof of Theorem \ref{generate}.

\section{Proof of Theorems}
\renewcommand{\thesection}{\arabic{section}}
\renewcommand{\theequation}{\thesection.\arabic{equation}}
\setcounter{equation}{0}
\setcounter{theorem}{0}

\begin{lemma}\label{S=H}
Let $G$ be a compact Lie group, acting isometrically on a complete Riemannian manifold $M$. For any $p\in M$, set $S:=\{g\in G\ |\ gp\in L_{p}^{\#}\}$ and $H:=\{g\in G\ |\ gL_{p}^{\#}=L_{p}^{\#}\}$. Then $S=H$ and $L_{p}^{\#}\cap G(x)=H(x)$ for any $x\in L_{p}^{\#}$. Also, $gL_{p}^{\#}=L_{gp}^{\#}$ for any $g\in G$.
\end{lemma}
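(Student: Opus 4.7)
The plan is to establish the three assertions in the order (3), then $S=H$, then $L_p^{\#}\cap G(x)=H(x)$, since each statement feeds into the next. The single observation driving everything is that every $g\in G$ acts on $M$ as an isometry, so it sends the orbit foliation to itself and preserves orthogonality to orbits; in particular $g$ maps piecewise smooth horizontal curves to piecewise smooth horizontal curves.

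First I would prove $gL_{p}^{\#}=L_{gp}^{\#}$. Given $q\in L_{p}^{\#}$, pick a piecewise smooth horizontal curve $\gamma$ from $p$ to $q$; then $g\circ\gamma$ is a piecewise smooth horizontal curve from $gp$ to $gq$, showing $gq\in L_{gp}^{\#}$, i.e.\ $gL_{p}^{\#}\subseteq L_{gp}^{\#}$. The reverse inclusion comes from applying the same argument with $g^{-1}$ to any point of $L_{gp}^{\#}$. For $S=H$, the inclusion $H\subseteq S$ is immediate since $p\in L_{p}^{\#}$ implies $gp\in gL_{p}^{\#}=L_{p}^{\#}$ whenever $g\in H$. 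Conversely, since horizontal connectivity is an equivalence relation (horizontal curves concatenate), the dual leaves partition $M$; thus $gp\in L_{p}^{\#}$ forces $L_{gp}^{\#}=L_{p}^{\#}$, and combining this with (3) gives $gL_{p}^{\#}=L_{gp}^{\#}=L_{p}^{\#}$, so $g\in H$.

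Finally, for $L_{p}^{\#}\cap G(x)=H(x)$ with $x\in L_{p}^{\#}$: the inclusion $\supseteq$ is clear because $H(x)\subseteq G(x)$ and $H(x)\subseteq H(L_{p}^{\#})=L_{p}^{\#}$. For $\subseteq$, any $y\in L_{p}^{\#}\cap G(x)$ can be written as $y=gx$ with $g\in G$; since $x\in L_{p}^{\#}$ forces $L_{x}^{\#}=L_{p}^{\#}$, the condition $y\in L_{p}^{\#}$ becomes $gx\in L_{x}^{\#}$, placing $g$ in the set $S$ associated with the basepoint $x$. Applying the already-proved equality $S=H$ with $x$ in place of $p$, and using that the stabilizer of $L_{x}^{\#}=L_{p}^{\#}$ equals $H$, yields $g\in H$ and hence $y\in H(x)$.

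I do not anticipate a real obstacle here: no curvature hypothesis is actually used, and the argument rests entirely on two structural facts, namely that isometries preserve horizontal curves and that dual leaves partition $M$. The only point to handle with a bit of care is logical ordering, so that (3) is available when needed for $S=H$ and both are available for the final orbit identity.
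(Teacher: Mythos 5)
Your proof is correct and follows essentially the same route as the paper: both arguments rest on the two facts that elements of $G$ carry piecewise smooth horizontal curves to horizontal curves (giving $gL_{p}^{\#}=L_{gp}^{\#}$) and that the dual leaves partition $M$, and the orbit identity $L_{p}^{\#}\cap G(x)=H(x)$ is handled the same way in both. The only difference is that the paper outsources the statement that $S$ is a subgroup preserving $L_{p}^{\#}$ to a citation of Lemma 3.1 of \cite{sy1} and then declares $S=H$ easy, whereas you derive $S=H$ directly from the leaf-partition property together with $gL_{p}^{\#}=L_{gp}^{\#}$; your version is self-contained and equally valid.
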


\begin{proof} By Lemma 3.1 in \cite{sy1} $S$ is a subgroup of $G$ and the action of $S$ on $M$ preserves $L_{p}^{\#}$. Now it is easy to see that $S=H$. For any $p\in M$ and $g\in G$, we claim that $gL_{p}^{\#}=L_{gp}^{\#}$. For any $x\in L_{p}^{\#}$, one can connect $p$ and $x$ by a horizontal curve $\gamma$. Then $gx\in L_{gp}^{\#}$, since one can connect $gp$ and $gx$ by the horizontal curve $g\gamma$. Thus $gL_{p}^{\#}\subset L_{gp}^{\#}$. We also have $g^{-1}L_{gp}^{\#}\subset L_{g^{-1}gp}^{\#}=L_{p}^{\#}$. Thus $gL_{p}^{\#}=L_{gp}^{\#}$ as claimed. For any $x\in L_{p}^{\#}$ and $h\in H$, $hx\in hL_{p}^{\#}=L_{hp}^{\#}=L_{p}^{\#}$ since $hp\in L_{p}^{\#}$. Thus $H(x)\subset L_{p}^{\#}\cap G(x)$. For any $h\in G$ with $hx\in L_{p}^{\#}\cap G(x)$, $h\in S=H$ since $hp\in hL_{x}^{\#}=L_{hx}^{\#}=L_{p}^{\#}$. Hence $L_{p}^{\#}\cap G(x)\subset H(x)$. Thus $L_{p}^{\#}\cap G(x)=H(x)$ for any $x\in L_{p}^{\#}$.
\end{proof}

\begin{theorem}\label{generate}
Suppose a compact Lie group $G$ acts polarly on a simply connected, complete Riemannian manifold $M$. Let $p\in M_{o}$ be a regular point, and $C_{0}$ the unique chamber through $p$. Denote by $H=\{g\in G\ |\ gL_{p}^{\#}=L_{p}^{\#}\}$ the stabilizer group of $L_{p}^{\#}$. Then:

\noindent(1) The identity component $H^{o}$ of $H$ is generated by the identity components of face isotropy groups of $C_{0}$. Also, $H$ and $H^{o}$ are compact Lie groups.

\noindent(2) The action of $H$ and $H^{o}$ on $L_{p}^{\#}$ are polar, and $H(x)=H^{o}(x)$ for all $x\in L_{p}^{\#}$. Also, $L_{p}^{\#}$ is $H$-horizontal connected.

\noindent(3) $L_{p}^{\#}$ is closed in $M$.

\noindent(4) The orbit spaces of $(M, G)$ and $(L_{p}^{\#}, H)$ are both isometric to $C_{0}$, i.e., $M/G=C_{0}=L_{p}^{\#}/H$.
\end{theorem}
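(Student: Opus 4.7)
The plan is to prove all four parts simultaneously by identifying the dual leaf explicitly as $L_{p}^{\#}=H\cdot\Sigma$, where $H=\langle K_{1},\ldots,K_{k}\rangle$ is the subgroup generated by the face isotropies $K_{i}:=G_{F_{i}}$ of the chamber $C_{0}$. Two observations drive everything. First, $\Sigma\subset L_{p}^{\#}$ because $\Sigma$ is totally geodesic and meets every $G$-orbit orthogonally, so every curve in $\Sigma$ is automatically horizontal. Second, by Lemma~\ref{S=H} an element $g\in G$ lies in $H$ if and only if $gp\in L_{p}^{\#}$, which reduces the assertion $K_{i}\subset H$ to producing, for each $k\in K_{i}$, a piecewise smooth horizontal curve from $p$ to $kp$. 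I would simply concatenate the geodesic in $\Sigma$ from $p$ to a regular point $q$ in the interior of $F_{i}$ with the geodesic in the section $k\Sigma$ from $q$ to $kp$; both pieces are horizontal and they match at $q$ because $k$ fixes $F_{i}$ pointwise.

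The harder direction of (1) is the converse $H\subset\langle K_{i}\rangle$, which I would establish by a \emph{gallery decomposition}. Any piecewise smooth horizontal curve from $p$ to $hp$ refines to a gallery $C_{0}=g_{0}C_{0},g_{1}C_{0},\ldots,g_{n}C_{0}=hC_{0}$ (modulo the principal isotropy) whose consecutive chambers share a labeled face $g_{j}F_{i_{j}}=g_{j+1}F_{i_{j}}$. Because the face labeling is $G$-equivariant and the isotropy is constant along an open face (Proposition~\ref{p3}), the setwise stabilizer of $g_{j}F_{i_{j}}$ is exactly $g_{j}K_{i_{j}}g_{j}^{-1}$, so $g_{j}^{-1}g_{j+1}\in K_{i_{j}}$. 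Iterating gives $h\in\langle K_{i}\rangle\cdot G_{p}$; since $p$ is regular, $G_{p}$ acts trivially on $T_{p}\Sigma$ and hence fixes $\Sigma$ pointwise, forcing $G_{p}\subset\bigcap_{i}K_{i}\subset\langle K_{i}\rangle$. Thus $H=\langle K_{i}\rangle$, and $H^{o}=\langle K_{i}^{o}\rangle$ because $\langle K_{i}^{o}\rangle$ is a connected Lie subgroup with the same Lie algebra as $\langle K_{i}\rangle$; the compactness of $H$ and $H^{o}$ is then inherited from $G$.

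Parts (2)--(4) will follow once $L_{p}^{\#}=H\cdot\Sigma=H\cdot C_{0}$ is in hand, which is immediate from (1) together with the fact that reflection representatives $\tilde{r}_{i}$ already lie in $K_{i}\subset H$ (so $W_{G}(\Sigma)$ is realized inside $H$). For (3), $L_{p}^{\#}$ is closed as the image of the compact set $H\times C_{0}$ under $(h,x)\mapsto hx$. For (2), $\Sigma$ is an $H$-section of $L_{p}^{\#}$, since $H\Sigma=L_{p}^{\#}$ and $H$-orbits meet $\Sigma$ orthogonally (inherited from the $G$-action); polarity of $H^{o}$ then follows via Proposition~\ref{p2}, $H$-horizontal connectivity is automatic because $L_{p}^{\#}$ is a single dual leaf, and $H(x)=H^{o}(x)$ reduces to $H=H^{o}\cdot G_{p}$, which I would verify by using the polar slice representation of $K_{i}$ to show that its principal isotropy $G_{p}$ meets every component of each $K_{i}$. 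Finally (4) follows because the polar group of the $H$-action on $\Sigma$ already contains $W_{G}(\Sigma)$, so $C_{0}$ remains a fundamental domain: $L_{p}^{\#}/H=\Sigma/W_{G}(\Sigma)=C_{0}=M/G$.

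The main obstacle I anticipate is the gallery analysis underlying the converse in (1): converting an arbitrary horizontal curve into a gallery with transitions controlled by the face isotropies of $C_{0}$ itself. This depends on the $G$-equivariance of face labels from the chamber system formalism and, crucially, on Proposition~\ref{AT} (via simple-connectedness) to ensure the polar group is a reflection group with exactly $k$ generators. The component-level bookkeeping separating $K_{i}$ from $K_{i}^{o}$, and the verification of $H(x)=H^{o}(x)$, are secondary subtleties that I expect to resolve through the polar structure of the slice representation.
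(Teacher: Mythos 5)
Your overall strategy matches the paper's: show each face isotropy $K_i\subset H$ by concatenating a horizontal geodesic in $\Sigma$ from $p$ to an interior point $q$ of $F_i$ with its $k$-translate, and prove the converse inclusion by covering a horizontal curve from $p$ to $hp$ with a gallery and peeling off one face-isotropy element per wall crossing, ending with $h\in\langle K_i\rangle\cdot G_p$. Parts (2) and (4) via ``$\Sigma$ is an $H$-section'' and $W_H(\Sigma)=W_G(\Sigma)$ are also the paper's route. However, there are two places where your write-up has genuine gaps. First, in (3) you claim $L_p^{\#}$ is closed as ``the image of the compact set $H\times C_0$''; but $M$ is only assumed complete, not compact, so $C_0$ (a convex chamber isometric to $M/G$) can be noncompact --- think of a polar action on Euclidean space, where $C_0$ is a closed cone. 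The argument can be repaired (the saturation of a \emph{closed} set under a compact group action is closed, and $C_0$ is closed in $M$ by convexity and completeness of $M/G$), but as stated it fails; the paper instead argues by projecting a convergent sequence of $L_p^{\#}$ onto the orbit of its limit along horizontal minimizing geodesics and using that $L_p^{\#}\cap G(x)=H^{o}(\bar x_1)$ is a compact orbit, which avoids any global statement about $C_0$.

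Second, the component-level bookkeeping you defer is not a secondary subtlety but the actual crux of (1) and of $H(x)=H^{o}(x)$. Deducing $H^{o}=\langle K_i^{o}\rangle$ from ``same Lie algebra'' is not a proof: $\langle K_i^{o}\rangle$ need not be normal in $\langle K_i\rangle$, and one must know it is a closed embedded subgroup before comparing identity components (the paper invokes the fact that a subgroup of a compact Lie group generated by finitely many closed connected subgroups is closed, using finiteness of the set of faces, which in turn uses finiteness of $W_G(\Sigma)$). Likewise your claim $K_i=K_i^{o}\cdot G_p$ (that $G_p$ meets every component of $K_i$) is exactly as hard as the statement you are trying to prove: it amounts to realizing the reflection $r_i$ inside $G_q^{o}\cdot G_p$, i.e.\ to the transitivity of the \emph{identity component} of the face isotropy on the two chambers adjacent along $F_i$. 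The paper resolves both issues at once by running the gallery argument directly with the identity components $\bar H^{o}_{i_j}=G^{o}_{x_j}$ (using Proposition~\ref{p2} to make the slice representation of $G^{o}_{x_j}$ polar and hence transitive on chambers through $x_j$), which outputs the decomposition $g=ka$ with $k\in\langle K_i^{o}\rangle$ and $a\in G_p$ in one stroke; this simultaneously gives $H^{o}=\langle K_i^{o}\rangle$, $H=H^{o}G_p$ (hence $H(x)=H^{o}(x)$ since $G_p$ fixes $\Sigma$ pointwise), and finiteness of $H/H^{o}$. You should restructure your argument for (1) along these lines rather than treating the identity-component issue as an afterthought.
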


\begin{proof}\noindent(1) Here we use the same method as Theorem 3.2 in \cite{FGT} to prove (1). Let $K\subset G$ be the group generated by the identity components of face isotropy groups of $C_{0}$. First, we claim that $K\subset H$. Let $H_{i}$ be the isotropy group of the face $F_{i}$ of $C_{0}$, and $H_{i}^{o}$ the identity component of $H_{i}$. By Proposition \ref{p3}, for any point $q$ in the interior of $F_{i}$, we have $G_{q}=G_{F_{i}}=H_{i}$. Let $\gamma\subset C_{0}$ be a horizontal geodesic from $q$ to $p$. Then for any $g\in G_{q}=H_{i}$, $g\gamma$ is a horizontal geodesic from $q$ to $gp$. Thus $gp\in L_{p}^{\#}$, and $g\in S=\{g\in G\ |\ gp\in L_{p}^{\#}\}$. By Lemma 3.1 we have $H_{i}\subset S=H$. This proves that $K\subset H$. Hence $K\subset H^{o}$.

Next, we claim that $H^{o}\subset K$. For any $y\in L_{p}^{\#}\cap G(p)$, we have $y=gp$ for some $g\in S=H$. Let $\alpha$ be a horizontal curve from $p$ to $y$. Since $G$ acts transitively on the set of all chambers in all sections of $M$, i.e. $M = \cup_{g\in G}gC_{0}$, there is a gallery $\Gamma=(C_{0},\ldots, C_{k})$ of type $i_{1}i_{2}\ldots i_{k}$ that covers $\alpha$, i.e. $\alpha\subset \cup_{0\leq i\leq k}C_{i}$, where $p\in C_{0}$ and $y\in C_{k}$. Since any regular point is in a unique chamber and $y=gp\in M_{o}$, we get that $C_{k}=gC_{0}$. Let $\bar{F}_{i_{j}}$ be the common face of $C_{j-1}$ and $C_{j}\ (1\leq j\leq k)$, and $x_{j}$ a interior point of $\bar{F}_{i_{j}}$. Then $G_{x_{j}}=G_{\bar{F}_{i_{j}}}=\bar{H}_{i_{j}}$ is the isotropy group of the face $\bar{F}_{i_{j}}$. By Proposition \ref{p2} $G^{o}_{x_{j}}=\bar{H}^{o}_{i_{j}}$ acts polarly on $T^{\perp}_{x_{j}}(G({x_{j}}))$. Thus $\bar{H}^{o}_{i_{j}}$ acts transitively on the set of sections of $(G, M)$ through $x_{j}$. Since any chamber is on a unique section, we get that $\bar{H}^{o}_{i_{j}}$ acts transitively on the set of chambers through $x_{j}$. Thus $C_{j}=g_{i_{j}}C_{j-1}$ for some $g_{i_{j}}\in \bar{H}^{o}_{i_{j}}$. Set $g_{j}:=g_{i_{j}}g_{i_{j-1}}\cdots g_{i_{1}}$ for $1\leq j\leq k$. Then we have $C_{j}=g_{j}C_{0}$ and $gC_{0}=C_{k}=g_{k}C_{0}$. Since the face $\bar{F}_{i_{k}}$ of $C_{k-1}=g_{k-1}C_{0}$ and the face $F_{i_{k}}$ of $C_{0}$ are both of type $i_{k}$, we have $\bar{F}_{i_{k}}=g_{k-1}F_{i_{k}}$. Since $G_{\bar{F}_{i_{k}}}=G_{g_{k-1}F_{i_{k}}}=g_{k-1}G_{F_{i_{k}}}g_{k-1}^{-1}$, we get that $g_{i_{k}} = g_{k-1}h_{i_{k}}g_{k-1}^{-1}$, and hence $gC_{0} = g_{k-1}h_{i_{k}}g_{k-1}^{-1}(g_{k-1}C_{0})= (g_{i_{k-1}}\cdots g_{i_{1}})h_{i_{k}}C_{0}$, where $h_{i_{k}}\in H_{i_{k}}^{o}$ and $H_{i_{k}}$ is the isotropy group of the face $F_{i_{k}}$ of $C_{0}$. Proceeding in this way we get that $gC_{0} = h_{i_{1}} h_{i_{2}}\cdots h_{i_{k}}C_{0}$. Thus $(h_{i_{1}} h_{i_{2}}\cdots h_{i_{k}})^{-1}g\in G_{p}$. Hence any $g\in H$ can be written as $g=ka$ for some $k\in K$ and $a\in G_{p}$. Since $G_{p}$ acts trivially on $C_{0}$, we get that $G^{o}_{p}\subset K$. Thus $H^{o}\subset K$. So we get that $H^{o}= K$.

Since $G$ is compact and $G_{p}$ is closed in $G$, we see that $G_{p}$ and $N(G_{p})/G_{p}$ is compact. Thus by Proposition \ref{p1} and \ref{AT} $W=W_{G}(\Sigma)$ is finite, and hence $C_{0}$ has finitely many chamber faces. Therefore $H^{o}$ is generated by finitely many closed connected subgroup, and hence $H^{o}$ is closed in $G$ (see the answer by Mikhail Borovoi in mathoverflow \cite{Bo}). Thus $H^{o}$ is compact. Since $G_{p}$ is compact, $G_{p}/G^{o}_{p}$ is a finite group. Since $G^{o}_{p}\subset H^{o}$ and any $g\in H$ can be written as $g=ka$ for some $k\in K$ and $a\in G_{p}$, we get that $H/H^{o}$ is also a finite group, and hence $H$ is compact.

\noindent(2) Let $\Sigma$ be the section of $(M, G)$ through $p$. For any $x\in L_{p}^{\#}$, assume that $\tilde{x}\in G(x)\cap \Sigma$. By Lemma 3.1 $\tilde{x}\in L_{p}^{\#}\cap G(x)=H(x)$. Thus every $H$-orbit in $L_{p}^{\#}$ intersects $\Sigma$. Now it is easy to see that $\Sigma$ is a section for the isometric action $(L_{p}^{\#}, H)$. Hence the action of $H$ and $H^{o}$ on $L_{p}^{\#}$ is polar. By the proof of (1) we see that any $g\in H$ can be written as $g=ka$ for some $k\in K=H^{o}$ and $a\in G_{p}$. Since $G_{p}$ acts trivially on $\Sigma$, $g\tilde{x}=k\tilde{x}$. Thus $H(x)=H(\tilde{x})=H^{o}(\tilde{x})=H^{o}(x)$ for any $x\in L_{p}^{\#}$. By definition $L_{p}^{\#}$ is $G$-horizontal connected, hence $L_{p}^{\#}$ is also $H$-horizontal connected since any $G$-horizontal curve must be a $H$-horizontal curve.

\noindent(3) Let $\{x_{n}\}\subset L_{p}^{\#}$ be a sequence of points that converges to $x\in M$, i.e., $\lim_{n\rightarrow\infty} d(x_{n},x)=0$, where $d$ is the Riemannian distance on $M$. To prove that $L_{p}^{\#}$ is closed in $M$, it suffices to prove that $x\in L_{p}^{\#}$. $G(x)$ is a closed submanifold of $M$ since $G$ is compact. So for any $x_{i}\in \{x_{n}\}$, there is a $\bar{x}_{i}\in G(x)$ such that $d(x_{i},\bar{x}_{i})=d(x_{i},G(x))$. Thus the minimal geodesic $\gamma$ from $x_{i}$ to $\bar{x}_{i}$ is perpendicular to $G(x)$, it means $\gamma$ is horizontal. Then $\bar{x}_{i}\in L_{p}^{\#}$ since $x_{i}\in L_{p}^{\#}$. Since $d(\bar{x}_{i},x)\leq d(\bar{x}_{i},x_{i})+d(x_{i},x)\leq 2d(x_{i},x)$, we get that $\{\bar{x}_{n}\}$ converges to $x$. By (2) and Lemma 3.1, $\{\bar{x}_{n}\}\subset L_{p}^{\#}\cap G(x)=L_{p}^{\#}\cap G(\bar{x}_{1})=H(\bar{x}_{1})=H^{o}(\bar{x}_{1})$. By (1) $H^{o}$ is compact, thus $H^{o}(\bar{x}_{1})$ is a closed submanifold of $M$. Hence $x\in H^{o}(\bar{x}_{1})\subset L_{p}^{\#}$. It follows that $L_{p}^{\#}$ is closed in $M$.

\noindent(4) Let $\Sigma$ be the section of $(M, G)$ through $p$. By (2) $\Sigma$ is also a section for the polar action $(L_{p}^{\#}, H)$.
Set $N_{H}(\Sigma):=\{g\in H|\ g\Sigma=\Sigma\}$ and $Z_{H}(\Sigma):=\{g\in H|\ gx=x, \forall x\in\Sigma\}$. Then $W_{H}(\Sigma):= N_{H}(\Sigma)/Z_{H}(\Sigma)$ is the polar group of $(L_{p}^{\#}, H)$ associated to $\Sigma$. For any $g\in N_{G}(\Sigma)$, $gp\in \Sigma\subset L_{p}^{\#}$, thus by Lemma 3.1 $g\in S=H$. Thus $N_{G}(\Sigma)\subset H$, and hence $N_{H}(\Sigma)=N_{G}(\Sigma)\cap H=N_{G}(\Sigma)$. Also, $Z_{H}(\Sigma)=G_{p}=Z_{G}(\Sigma)$. Therefore $W_{H}(\Sigma)=W_{G}(\Sigma)$. Hence $M/G=\Sigma/W_{G}(\Sigma)=C_{0}=\Sigma/W_{H}(\Sigma)=L_{p}^{\#}/H$.
\end{proof}

\noindent {\it Proof of Theorem \ref{dl}.} For any $p\in M$, since $L_{p}^{\#}\cap M_{o}\neq\emptyset$, we can assume that $p\in M_{o}$ is a regular point. For any $x\in M$, set $A_{x}:=T^{\perp}_{x}L_{x}^{\#}$. Let $\mathfrak{g}=Lie(G)$ be the Lie algebra of $G$. Set $\mathfrak{m}:=\{X\in \mathfrak{g}\ |\ X(p)\in A_{p}\}$. Let $\gamma(t)$ be a horizontal geodesic with $\gamma(0)=p$ and $\gamma'(0)=\xi$. We extend $\xi$ to a horizontal vector field $\hat{\xi}$ on $G(p)$ by defining $\hat{\xi}_{gp}=g_{\ast}\xi$ for $g\in G$. Since $\hat{\xi}$ is $G$-equivariant, $\hat{\xi}$ is a basic normal field on $G(p)$. Let $X\in \mathfrak{m}$, and $\phi_{s}$ be the one-parameter subgroup of $G$ corresponding to X. Since the Killing field $X(t):=X(\gamma(t))$ is the variational field of the variation $\gamma_{s}(t):=\phi_{s}(\gamma(t))=\exp_{\phi_{s}p}(t\hat{\xi}_{\phi_{s}p})$, we get that $X(t)$ is a holonomy field along $\gamma(t)$. By Corollary 18.7 in \cite{LW}, $X(t)$ is parallel along $\gamma(t)$ and $X(t)\in A_{\gamma(t)}$ for all $t\in \mathbb{R}$. It follows that $X(x)\in A_{x}=T^{\perp}_{x}L_{p}^{\#}$ for all $x\in \Sigma_{p}$, where $\Sigma_{p}$ is the section of $(M, G)$ through $p$. For any $x\in M$, set $\mathfrak{m}(x):= span\{X(x)\ |\ X\in \mathfrak{m}\}$. Then we get that $\mathfrak{m}(x)=A_{x}$ for all $x\in \Sigma_{p}$.

Let $C_{0}\subset\Sigma_{p}$ be the unique chamber through $p$, and $H_{i}$ the isotropy group of the face $F_{i}$ of $C_{0}$. By Proposition \ref{p3}, for any point $q$ in the interior of $F_{i}$, we have $G_{q}=H_{i}$. For any $X\in \mathfrak{m}$ and $g\in G_{q}=H_{i}$, $(Ad_{g}X)(q)=(Ad_{g}X)(gq)=g_{\ast}(X(q))\perp g_{\ast}(T_{q}L_{p}^{\#})=T_{q}L_{p}^{\#}$ since $H_{i}\subset H$ preserves $L_{p}^{\#}$. Since $\mathfrak{m}(q)=A_{q}=T^{\perp}_{q}L_{p}^{\#}$, $Ad_{g}X\subset \mathfrak{m}$. Since $\dim(Ad_{g}(\mathfrak{m}))=\dim(\mathfrak{m})$, $Ad_{g}(\mathfrak{m})=\mathfrak{m}$ for any $g\in H_{i}$. Now by Theorem 3.2 (1) $Ad_{g}(\mathfrak{m})=\mathfrak{m}$ for any $g\in H^{o}$, where $H=\{g\in G\ |\ gL_{p}^{\#}=L_{p}^{\#}\}$. By Theorem 3.2 (2) and Lemma 3.1, $L_{p}^{\#}\cap G(p)=H(p)=H^{o}(p)$. Now for any $g\in H^{o}$ and $X\in \mathfrak{m}$, we have $(Ad_{g}X)(gp)=g_{\ast}(X(p))\perp g_{\ast}(T_{p}L_{p}^{\#})=T_{gp}L_{p}^{\#}$. Since $Ad_{g}(\mathfrak{m})=\mathfrak{m}$, $\mathfrak{m}(gp)=T^{\perp}_{gp}L_{p}^{\#}$. By the same reason as above, we get that $\mathfrak{m}(y)=T^{\perp}_{y}L_{p}^{\#}$ for any $y\in \Sigma_{gp}$ and $g\in H^{o}$, where $\Sigma_{gp}$ is the section through $gp$. Thus $\mathfrak{m}(x)=T^{\perp}_{x}L_{p}^{\#}$ for any $x\in L_{p}^{\#}$.

For any $x\in L_{p}^{\#}$ and $u\in T^{\perp}_{x}L_{p}^{\#}$, there is a Killing field $X\in \mathfrak{m}$ with $X(x)=u$, since $\mathfrak{m}(x)=T^{\perp}_{x}L_{p}^{\#}$. Also, $X(y)\in T^{\perp}_{y}L_{p}^{\#}$ for any $y\in L_{p}^{\#}$. Let $S$ be the Weingarten operator of $L_{p}^{\#}$. Then $\langle S_{u}v, v\rangle = -\langle\nabla _{v}X, v\rangle = 0$ for all $v\in T_{x}L_{p}^{\#}$, since $X$ is a Killing field. Therefore $S$ vanishes along $L_{p}^{\#}$. It follows that $L_{p}^{\#}$ is totally geodesic in $M$. Since $M$ has nonnegative sectional curvature, $L_{p}^{\#}$ is also nonnegatively curved.

The rest of the proof follows from Theorem \ref{generate}.
$\hfill \square$\\

\noindent {\it Proof of Theorem \ref{df}.} Suppose the dual foliation $\mathcal{F}^{\#}$ has more than one leaf. By Theorem \ref{dl} every leaf of $\mathcal{F}^{\#}$ is closed. Now by Theorem 2 and Theorem 3 in \cite{wi} $\mathcal{F}^{\#}$ is a singular Riemannian foliation with closed leaves, thus the leaves of $\mathcal{F}^{\#}$ are the fibers of a submetry $\sigma: M\rightarrow Y$, where $Y=M/\mathcal{F}^{\#}$ is the leaf space of $\mathcal{F}^{\#}$. Fix a $p\in M$, and set $H=\{g\in G\ |\ gL_{p}^{\#}=L_{p}^{\#}\}$. It is easy to see that any leaf of $\mathcal{F}^{\#}$ can be written as $L_{gp}^{\#}$ for some $g\in G$. By Lemma 3.1 $L_{gp}^{\#}=gL_{p}^{\#}$ for any $g\in G$, thus the action of $G$ on $M$ induces a transitive isometric action of $G$ on the space $Y$ of dual leaves. Since $H$ is the stabilizer group of $L_{p}^{\#}$, $Y$ is the homogeneous space $G/H$. By Proposition 9.1 in \cite{wi} $\sigma$ is a $G$-equivariant Riemannian submersion. The rest of the proof follows from Theorem \ref{dl} since any fiber of $\sigma$ can be written as $L_{gp}^{\#}=gL_{p}^{\#}$ for some $g\in G$.
$\hfill \square$\\

 \end{document}